\theoremstyle{plain}
\newtheorem{theorem}{Theorem}[section]
\newtheorem{remark}{Remark}
\numberwithin{equation}{section}
\newcommand{\rom}[1]{\uppercase\expandafter{\romannumeral #1\relax}}
\begin{document}
{
  \title{\bf Distribution of runs and patterns in four state trials}
 \author{Jungtaek Oh\thanks{Corresponding Author:
 Department of Biomedical Science, Kyungpook National University, Daegue, 41566, Republic of Korea
 (e-mail: jungtaekoh0191@gmail.com)}\hspace{.2cm}\\
}
  \maketitle
} 

\begin{abstract}
From a mathematical and statistical perspective, a segment of a DNA strand can be viewed as a sequence of four-state (A, C, G, T) trials. In thispaper, we consider the distributions of runs and patterns related to the run lengths of four states (A, B, C, and D). Let $X_{1}, X_{2}, \ldots$ be a sequence of four state i.i.d.\ trials taking values in the set $\mathscr{S}=\{A,\ B,\ C,\ D\}$ of four symbols with probability $P(A)=P_{a}$, $P(B)=P_{b}$, $P(C)=P_{c},$ and $P(D)=P_{d}.$ We obtain exact formulas for the probability mass functions of the distributions of runs of B's of order $k$, for longest run, shortest run, waiting time, and run lengths.

\end{abstract}

\noindent%
{\it Keywords:}  runs and patterns, multistate trials, discrete distribution of order $k$, waiting time distribution, distribution of run length, longest run, shortest run, DNA sequence.

%



\section{Introduction}
\label{Introduction}

Runs and runs-related statistics have been intensively studied in the literature because of their wide range of applications in various areas, including statistics (e.g.,\ hypothesis testing), engineering (e.g.\ system reliability, health services monitoring and quality control), molecular biology and bioinformatics (e.g.,\ population genetics and deoxyribonucleic acid (DNA) sequence homology), physics, psychology, radar astronomy, computer science (e.g.,\ encoding/decoding and transmission of digital information), and finance (e.g.,\ financial engineering, risk analysis, and risk prediction). Significant progress on runs and related statistics in the past few decades has been comprehensively surveyed by \citet{balakoutras03} and \citet{fuwendy03}. Moreover, there are various more recent contributions on the topic, such as \citet{arapis18}, \citet{serkan18}, \citet{kong19}, \citet{makri19}, and \citet{aki19}.\\

There are two main types of problems concerning runs and runs-related statistics.
\noindent
\begin{itemize}
  \item[(1)] The number of trials until the first occurrence of a run or pattern (or until the $r$-th occurrence of a run or pattern), which is called a waiting time problem.
  \item[(2)] The number of occurrences of a run or pattern until the $n$-th trial.
\end{itemize}
\noindent
Waiting time distributions have attracted considerable interest in applied probability. Properties of waiting time distributions are extensively analyzed in \citet{ebneshahrashoob90}, \citet{aki92}, \citet{balasubramanian93}, \citet{koutras96}, \citet{antzoulakos97}, \citet{antzoulakos99}, \citet{kim13}, and \citet{inoue14}.\\
\noindent
The geometric distribution of order $k$ is a well-known waiting time distribution. This is defined as the distribution of the number of trials until the first consecutive $k$ successes. This definition is based on \citet{philippou83}. This distribution becomes the classical \textrm{geometric distribution} for the case $k=1$ (the number of Bernoulli trials needed to get one success) with probability mass function $f(x)=q^{x-1}p$ $(x\geq 1).$\\
\noindent
Let $X_{1}, X_{2}, \ldots$ be a sequence of four state independent identically distributed (i.i.d.) trials that take values in the set $\mathscr{S}=\{A,\ B,\ C,\ D\}$ of four symbols. For the number of trials $W_1^k$ until the first consecutive $k$ successes we have the following equivalent definitions of $W_{1}^{k}$:
  \begin{equation*}\label{eq: 1.1}
\begin{split}
 W_{1}^{k} \ =& \ \text{min}\{n : X_{n-k+1} =\cdots = X_{n} = 1\}\\
 =& \ \text{min}\left\{n : \prod_{n-k+1}^{n}X_{j} = 1\right\}\\
  =& \ \text{min}\left\{n : \sum_{n-k+1}^{n}X_{j} = k\right\}.
\end{split}
\end{equation*}
\noindent
Various statistical properties of the geometric distribution of order $k$ have been applied to multiple areas, ranging from quality control to reliability. In particular, the consecutive-$k$-out-of-$n$:$F$ system (see, \citet{chao1995}, \citet{chang2000}, \citet{kuo2003}, \citet{ser10}, and \citet{triantafyllou2015}) is closely related to this distribution. Another closely related random variable is the length $L_{n}$ of the longest success run in $n$ four state trials. Because $W_{1}^{k}\leq n$ if and only if $L_{n} \geq k$, we have the following between the probability distribution functions of $W_{1}^{k}$ and $L_{n}$ : $\mathbf{P(}W_{1}^{k} \leq n) = \mathbf{P}(L_{n} \geq k).$\\
\noindent
Let $X_{1}, X_{2}, \ldots, X_{n}$ be random variables i.i.d. $W_{1}^{k}$. The distribution of the number $W_{r}^{k}$ of trials until the $r$-th appearance of a success run of length $k$ is a negative binomial distribution of order $k.$ This follows from the fact that it is the distribution of the $r$-fold convolution of the geometric distribution of order $k$ :
\begin{equation*}
W_{r}^{k}=\sum_{i=1}^{r}X_{i}.
\end{equation*}
\noindent
Some main aspects of the occurrences of runs and patterns are: where, when, and how many times they occur. To study these, statistics that count runs and patterns according to various enumerating schemes are defined. \citet{feller68} presented a classical counting scheme for enumerating runs of fixed length : once $k$ consecutive successes occur, the number of occurrences of $k$ consecutive successes is incremented and the counting procedure starts anew, referred to as a \textit{nonoverlapping} counting scheme, which follows a binomial distribution of order $k$. In another counting scheme, we count only the number $E_n^k$ of success runs of length exactly $k$ preceded and succeeded either by failures or by nothing (\citet{mood40}).\\
\noindent
In a sequence of $n$ four state trials, we can define other statistics via runs and patterns, such as the longest (maximal) and the shortest (minimal) run lengths (see, \cite{sen02}). The longest and shortest runs are the upper and lower bounds of the number of consecutive successes that appear in a sequence of $n$ four state trials, respectively. These distributions can be applied in DNA-type sequence comparison by observing the frequencies of the longest runs of matches or mismatches.\\
\noindent
Sequences of categorical outcomes frequently arise in biomedical research, representing, for example, DNA strand segments or outcomes of healthcare evaluations. They are typically analyzed by defining problem-specific statistics involving runs and patterns. A DNA molecule is a chain or sequence of pairs of nucleotides with four base structures: adenine, cytosine, guanine, and thymine (or A, C, G, and T, respectively). The occurrence of a specified nucleotides sequence in some portion of the chain is the event that the specified run of A, C, G, and T occurs. Mathematically, a random DNA strand segment can be viewed as a sequence of four-state (A, C, G, T) trials. We consider some distributions of runs and patterns related to run lengths for multistate trials, particularly four-state trials.\\
\noindent
For a sequence $X_{1}, X_{2}, \ldots, X_{n}$ of i.i.d.\ trials taking values in $\mathscr{S}=\{A,\ B,\ C,\ D\}$ with probabilities $P(A)=P_{a}$, $P(B)=P_{b}$, $P(C)=P_{c}$, and $P(D)=P_{d},$ such that $P_{a}+P_{b}+P_{c}+P_{d}=1,$ we consider the following stochastic variables.

\begin{itemize}
  \item Let $N_{n}^{k}$ denote the number of nonoverlapping runs of B's of order $k$ in $n$ independent trials.
  \item Let $E_{n}^{k}$ denote the number of runs of B's of exact length $k$, preceded and succeeded by A, C, D, or nothing.
  \item Let $W_{1}^{k}$ denote the waiting time for the first occurrence of a run of B's of length $k$.
  \item Let $W_{r}^{k}$ denote the waiting time for the $r$-th occurrence of the run of B's of length $k$.
  \item Let $L_{n}$ denote the maximum length of a run of B's (longest run statistics).
  \item Let $M_{n}$ denote the minimum length of a run of B's (shortest run statistics).
  \item Let $NL_{n}$ denote the number of times a nonoverlapping run of length $L_{n}$ appears.
  \item Let $NM_{n}$ denote the number of times a nonoverlapping run of length $M_{n}$ appears.
\end{itemize}
\noindent
To illustrate the a aforementioned variables, we consider the runs of B's in the following example for $n=40$: $$DAAABBBBBCAABBBCCADBBBBCCDABBBBBBCACCDBB.$$
 For which one can check $N_{40}^{2}=9$, $N_{40}^{3}=5$, $L_{40}=6$, $M_{40}=2$, $E_{40}^{2}=1$, $E_{40}^{3}=1$, $E_{40}^{4}=1$, $E_{40}^{5}=1$, $E_{40}^{6}=1$,
 $NL_{40}=1$ and $NM_{40}=1$.\\
\noindent
In this paper, we obtain the probability distribution functions of runs and patterns in four state i.i.d. trials in terms of multinomial coefficients. The exact PMFs were are derived via combinatorial analysis. In Section \ref{section2}, we obtain the PMFs of a binomial distribution of order $k$ and distribution of runs of length exactly $k$. In Section \ref{section3}, we obtain PMF's of a geometric distribution of order $k$ and a negative binomial distribution of order $k$. In Section \ref{section4} we obtain the PMF's for the distribution of the longest and shortest runs.

\section{Discrete distribution of order $k$}\label{section 2-1}
\label{section2}

We consider a sequence $X_{1}, X_{2}, \ldots, X_{n}$ of multistate trials defined on the state space $\mathscr{S}=\{A,\ B,\ C,\ D\}$ with probabilities $P(A)=P_{a}$, $P(B)=P_{b}$, $P(C)=P_{c}$ and $P(D)=P_{d},$ such that $P_{a}+P_{b}+P_{c}+P_{d}=1,$.
In this section, we obtain the PMFs of a binomial distribution of order $k,$ and the distribution of runs of length exactly $k$ using combinatorial analysis.
\subsection{Binomial distribution of order $k$}
First, we consider the number $N_{n}^{k}$ of runs of B's of length $k$ in $n$ i.i.d.\ four state trials. We establish the PMF of the random variable $N_{n}^{k}$ using combinatorial analysis.

\begin{theorem}
\label{thm 1}
The {\rm PMF} of $N_{n}^{k}$, for $ 0\leq x\leq \left[\frac{n}{k}\right]$, is given by
\begin{equation*}\label{eq: 1.1}
\begin{split}
P(N_{n}^{k}=x)=P_{b} ^{n}\sum_{i=0}^{k-1} \sum_{\star i}{\sum_{t=1}^{k} (x_{t}+ y_{t}+z_{t})+x   \choose   x_{1},\ldots,x_{k},y_{1},\ldots,y_{k},z_{1},\ldots,z_{k},x}\left(\frac{P_{a}}{P_{b}}\right)^{\sum_{t=1}^{k} x_{t}} \left(\frac{P_{c}}{P_{b}}\right)^{\sum_{t=1}^{k} y_{t}} \left(\frac{P_{d}}{P_{b}}\right)^{\sum_{t=1}^{k} z_{t}},
\end{split}
\end{equation*}
where the inner summation $\sum_{\star}$ is over all nonnegative integers $x_{1},\ldots,x_{k}, y_{1},\ldots,y_{k}, z_{1},\ldots,z_{k}$ for which
$\sum_{t=1}^{k} t(x_{t}+y_{t}+z_{t})+kx+i=n,$ for $ i=0, 1,\ldots , k-1.$
\end{theorem}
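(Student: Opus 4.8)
The plan is to count, for each fixed residue $i$ with $0 \le i \le k-1$, the number of sequences of length $n$ in $\mathscr S^{n}$ that contain exactly $x$ nonoverlapping runs of B's of order $k$, and then multiply by the corresponding probability weight. Because the trials are i.i.d., any particular string $w$ of length $n$ with $a$ occurrences of $A$, $c$ of $C$, $d$ of $D$ (and hence $n-a-c-d$ of $B$) has probability $P_a^{a}P_c^{c}P_d^{d}P_b^{\,n-a-c-d}$; factoring out $P_b^{n}$ turns this into $P_b^{n}(P_a/P_b)^{a}(P_c/P_b)^{c}(P_d/P_b)^{d}$, which already matches the shape of the summand. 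So the entire content of the theorem is a bijective/combinatorial count of strings with a prescribed value of $N_n^k$, graded by their letter-composition.

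The key step is to understand the nonoverlapping counting scheme of Feller as a parsing of the string. Reading left to right and resetting the counter each time $k$ consecutive B's have been accumulated, the string decomposes uniquely into $x$ ``complete'' blocks — each a maximal-for-the-scheme chunk ending exactly when the $k$-th B of that block arrives — followed by a terminal segment that never completes another run. First I would make this decomposition precise: between and around the B's, the non-B letters ($A$, $C$, $D$) are distributed into $k$ categories according to the position (mod the within-block B-count, i.e., $t = 1,\dots,k$) at which each maximal block of non-B symbols sits relative to the running B-counter. This is exactly what the indices $x_t, y_t, z_t$ record: $x_t$ (resp. $y_t$, $z_t$) is the total number of $A$'s (resp. $C$'s, $D$'s) that appear in ``slot $t$''. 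The $kx$ in the multinomial's top entry accounts for the $kx$ B's consumed by the $x$ completed runs, while the constraint $\sum_t t(x_t+y_t+z_t) + kx + i = n$ is just the bookkeeping that the total length is $n$: each non-B letter in slot $t$ effectively ``costs'' $t$ because it is preceded within its block by $t-1$ B's that were counted but then the counter was not yet reset, plus the letter itself — and $i$ is the number of leftover B's in the terminal segment that fail to complete an $(x+1)$-st run, forcing $0 \le i \le k-1$. The multinomial coefficient $\binom{\sum_t (x_t+y_t+z_t)+x}{x_1,\dots,z_k,x}$ then counts the number of ways to interleave the $x$ ``run-completion markers'' with the multiset of slotted non-B letters, i.e., the number of distinct strings realizing a given composition.

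After fixing the combinatorial model, I would verify it by the standard consistency checks rather than by re-deriving everything: (a) specializing to $P_a = P_c = P_d = 0$ (degenerate, all B) or more usefully checking the marginal total $\sum_x P(N_n^k = x) = 1$ via the multinomial theorem summed over $i$; (b) checking small cases, e.g. $n < k$ forces $x = 0$ and the formula should collapse to $\sum_{i} (\text{strings of length } i \text{ with no completed run})$; and (c) cross-checking against the known Bernoulli binomial distribution of order $k$ by merging $A,C,D$ into a single failure symbol with probability $1-P_b$, where the triple sum over $(x_t,y_t,z_t)$ should collapse, via the multinomial theorem, to a single sum over $m_t = x_t+y_t+z_t$ with weight $((1-P_b)/P_b)^{\sum m_t}$, recovering Philippou–Makri–Georghiou.

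I expect the main obstacle to be proving that the proposed decomposition of a string into ``$x$ completed blocks plus a terminal remainder graded by slot index $t$'' is genuinely a bijection — i.e., that every string maps to exactly one admissible tuple $(i; x_1,\dots,z_k; x)$ and, conversely, that every admissible tuple is realized by the claimed number of strings, with no double counting across different residues $i$ or different slot-assignments. The subtlety is that a ``slot'' is defined dynamically by the state of the running B-counter, so one must argue carefully that reading the string deterministically assigns each non-B letter to a well-defined $t \in \{1,\dots,k\}$ and that the interleaving freedom is captured exactly by the multinomial coefficient and nothing more (in particular, that the positions of the $x$ completion events among the slotted letters are unconstrained beyond what the coefficient allows). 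Once that bijection is nailed down, attaching the probability weight $P_b^{n}(P_a/P_b)^{\sum x_t}(P_c/P_b)^{\sum y_t}(P_d/P_b)^{\sum z_t}$ to each string and summing is immediate.
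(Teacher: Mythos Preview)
Your proposal is correct and follows essentially the same route as the paper: the paper makes your ``slotted non-B letter'' idea explicit by introducing the block alphabet $O_t = B^{t-1}A$, $J_t = B^{t-1}C$, $T_t = B^{t-1}D$ for $1\le t\le k$, so that every string in $\mathscr{S}^n$ parses uniquely as a free word in $\{O_t,J_t,T_t,B^k : 1\le t\le k\}$ followed by a tail $B^i$ with $0\le i\le k-1$, and the multinomial coefficient simply counts such words with prescribed letter multiplicities $(x_1,\dots,z_k,x)$. The bijection you single out as the main obstacle is precisely what this block decomposition encodes, and the paper asserts it without further justification.
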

\begin{proof}
Let \ ${\underbrace{B\cdots B}_{t-1}}A=O_{t}$,\quad ${\underbrace{B\cdots B}_{t-1}}C=J_{t}$,\quad ${\underbrace{B\cdots B}_{t-1}}D=T_{t}$,\ where \ $1\leq t\leq k$.
A typical element of the event $\left\{N_{n}^{k}=x\right\}$ is a sequence
\begin{equation*}\label{eq: 1.1}
\begin{split}
\cdots {\boxed{ R_{t}}} \cdots \overset{1}{\boxed{{\underbrace{B\dots B}_{k}}}} \cdots {\boxed{ R_{t}}} \cdots \overset{2}{\boxed{{\underbrace{B\ldots B}_{k}}}} \cdots {\boxed{ R_{t}}} \cdots \overset{x}{\boxed{{\underbrace{B\ldots B}_{k}}}} \cdots {\boxed{ R_{t}}} \cdots\underbrace{B\ldots B}_{i}
\end{split},
\end{equation*}
\noindent
where $i\in \{0,\ldots,k-1\}$, and ${\boxed{ R_{t}}}$ represents any combination of the strings $O_{t},J_{t}$, and $T_{t}$ appearing altogether $x_{t}$, $y_{t}$, and $z_{t}$ times, respectively, in the sequence, satisfying
\begin{equation}\label{eq:bn}
\begin{split}
\sum_{t=1}^{k} t(x_{t}+y_{t}+z_{t})+kx+i=n.
\end{split}
\end{equation}
\noindent
The number of different ways of arranging this sequence equals
\noindent
\begin{equation}\label{eq:bn}
\begin{split}
{\sum_{t=1}^{k} x_{t}+\sum_{t=1}^{k} y_{t}+\sum_{t=1}^{k} z_{t}+x   \choose   x_{1},\ldots,x_{k},y_{1},\ldots,y_{k},z_{1},\ldots,z_{k}, x}.
\end{split}
\end{equation}
\noindent
Because of the independence of the trials, the probability of the above sequence is
\noindent
\begin{equation}\label{eq:bn}
\begin{split}
P_{a}^{\sum_{t=1}^{k} x_{t}} P_{b}^{\sum_{t=1}^{k} (t-1)(x_{t}+y_{t}+z_{t})+kx} P_{c}^{\sum_{t=1}^{k} y_{t}} P_{d}^{\sum_{t=1}^{k} z_{t}}.
\end{split}
\end{equation}
\noindent
The probability of the run of B's of length $i$ at the end of each of these sequences is $P_{b}^{i}$ $(0\leq i< k)$, which leads to the overall probability
\noindent
\begin{equation*}\label{eq: 1.1}
\begin{split}
P_{a}^{\sum_{t=1}^{k} x_{t}} P_{b}^{\sum_{t=1}^{k} (t-1)(x_{t}+y_{t}+z_{t})+kx+i} P_{c}^{\sum_{t=1}^{k} y_{t}} P_{d}^{\sum_{t=1}^{k} z_{t}}.
\end{split}
\end{equation*}
\noindent
Summing over $i=0,1,\ldots,k-1$ the result follows.
\end{proof}

\begin{remark}

For $P_{a}=q$, $P_{b}=p$ such that $p+q=1$, $P_{c}=P_{d}=0$ and $y_{1},\ldots,y_{k},z_{1},\ldots,z_{k}=0$, Theorem~\ref{thm 1} reduces to Theorem~2.1 of \citet{philippou86}.
\end{remark}

\subsection{Distributions of runs of length exactly $k$}
In this subsection we consider the number $E_{n}^{k}$ of runs of B's of length $k$ in $n$ i.i.d.\ four state trials. We establish the PMF of the random variable $E_{n}^{k}$ using combinatorial analysis.
\noindent
\begin{theorem}
\label{thm 2}
The PMF of $E_n^k$, for $0\leq x\leq \left[\frac{n+1}{k+1}\right]$, is given by
\begin{equation*}\label{eq: 1.1}
\begin{split}
P(E_{n}^{k}=x)=&P_{b}^{n}\sum_{i=0}^{n-x(k+1)}\sum_{\star (k,i)}\bigg[\text{\small$
 {x_{1}+\cdots +x_{n-x(k+1)}+y_{1}+\cdots+y_{n-x(k+1)}+z_{1}+\cdots+z_{n-x(k+1)}   \choose   x_{1},\ldots,x_{n-x(k+1)},y_{1},\ldots,y_{n-x(k+1)},z_{1},\ldots,z_{n-x(k+1)}}$}\\
&\text{\small$\times
\left(\frac{P_{a}}{P_{b}}\right)^{x_{1}+\cdots +x_{k+1}+\cdots +x_{n-x(k+1)}} \left(\frac{P_{c}}{P_{b}}\right)^{y_{1}+\cdots+y_{k+1}+\cdots+y_{n-x(k+1)}} \left(\frac{P_{b}}{P_{d}}\right)^{z_{1}+\cdots+z_{k+1}+\cdots+z_{n-x(k+1)}}$}\bigg],
\end{split}
\end{equation*}
\noindent
where the inner summation $\sum_{\star (k,i)}$ is over all nonnegative integers $ x_{1}$, $\ldots$, $x_{n-x(k+1)}$, $y_{1}$, $\ldots$, $y_{n-x(k+1)}$, $z_{1}$, $\ldots$, $z_{n-x(k+1)}$ for which $\sum_{t=1}^{n-x(k+1)}t(x_{t}+y_{t}+z_{t})=n-i$ and
\begin{equation*}
x_{k+1}+y_{k+1}+z_{k+1}=
\left\{
  \begin{array}{ll}
    x & \text{if $i\neq k,$} \\
    x-1 & \text{if $i=k$.}\\
  \end{array}
\right.
\end{equation*}
\end{theorem}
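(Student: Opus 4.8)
The plan is to run the block-decomposition argument of Theorem~\ref{thm 1}, adapted so that it records \emph{maximal} runs of B's rather than overlapping $k$-runs. As there, set $O_{t}=\underbrace{B\cdots B}_{t-1}A$, $J_{t}=\underbrace{B\cdots B}_{t-1}C$, $T_{t}=\underbrace{B\cdots B}_{t-1}D$, each a word of length $t$ consisting of $t-1$ B's followed by one non-B letter. The first step is to record the unique way of cutting an arbitrary string $X_{1}\cdots X_{n}$: reading left to right and cutting immediately after every letter lying in $\{A,C,D\}$, the string becomes a concatenation of blocks of the three types above, followed by a terminal run $\underbrace{B\cdots B}_{i}$ of some length $i\geq 0$ (with no blocks at all in the degenerate case where $X_{1}\cdots X_{n}$ is all B's). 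The observation to isolate is that the maximal runs of B's in $X_{1}\cdots X_{n}$ are exactly the B-prefixes of these blocks together with the terminal run; a block of type $t$ carries a maximal run of $t-1$ B's, so a maximal B-run has length exactly $k$ if and only if it is the B-prefix of a block of type $k+1$, or it is the terminal run and $i=k$.

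The second step is to translate the event $\{E_{n}^{k}=x\}$ into the side conditions in $\sum_{\star(k,i)}$. Writing $x_{t},y_{t},z_{t}$ for the numbers of blocks $O_{t},J_{t},T_{t}$ used, the previous step shows that $E_{n}^{k}$ equals $x_{k+1}+y_{k+1}+z_{k+1}$, augmented by $1$ when $i=k$ and by $0$ otherwise; hence a string lies in $\{E_{n}^{k}=x\}$ precisely when its block data obey the length identity together with the constraint on $x_{k+1}+y_{k+1}+z_{k+1}$,
\begin{equation*}
\sum_{t}t\,(x_{t}+y_{t}+z_{t})=n-i
\qquad\text{and}\qquad
x_{k+1}+y_{k+1}+z_{k+1}=
\begin{cases}
x, & i\neq k,\\
x-1, & i=k,
\end{cases}
\end{equation*}
which is exactly the description in the statement. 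Here $i$ ranges over the terminal-run lengths compatible with these identities, and correspondingly the block type $t$ cannot exceed the bound forced by them (the $x$, resp.\ $x-1$, required blocks of type $k+1$ already occupy $k+1$ positions apiece, which is what produces the range tied to $n-x(k+1)$).

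The third step is the enumeration and weighting, which is routine. For fixed $i$ and a fixed admissible choice of $(x_{t},y_{t},z_{t})$, the strings in $\{E_{n}^{k}=x\}$ carrying that data are in bijection with the orderings of the corresponding multiset of blocks (the terminal run is forced to the end), so their number is the multinomial coefficient displayed in the theorem. Each such string contains $\sum_{t}x_{t}$ letters $A$, $\sum_{t}y_{t}$ letters $C$, $\sum_{t}z_{t}$ letters $D$, and the remaining $n-\sum_{t}(x_{t}+y_{t}+z_{t})$ letters $B$, so by independence it has probability
\begin{equation*}
P_{a}^{\sum_{t}x_{t}}\,P_{c}^{\sum_{t}y_{t}}\,P_{d}^{\sum_{t}z_{t}}\,P_{b}^{\,n-\sum_{t}(x_{t}+y_{t}+z_{t})}
=P_{b}^{\,n}\left(\frac{P_{a}}{P_{b}}\right)^{\sum_{t}x_{t}}\left(\frac{P_{c}}{P_{b}}\right)^{\sum_{t}y_{t}}\left(\frac{P_{d}}{P_{b}}\right)^{\sum_{t}z_{t}}.
\end{equation*}
Multiplying the multinomial coefficient by this common probability, summing over all block data permitted by $\sum_{\star(k,i)}$, and finally summing over $i$, gives the asserted formula.

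The part I expect to be the real obstacle is the bookkeeping around the terminal run and the boundary cases: one must verify that, with the data held fixed, the left-to-right cutting is genuinely a bijection onto the relevant strings; that the requirement ``preceded and succeeded by $A$, $C$, $D$, or nothing'' is encoded faithfully (in particular, that the terminal run, being succeeded by nothing, does count as an exact-$k$ run when $i=k$, and the all-B's case); and that the ranges of $i$ and of the block type $t$ are exactly those forced by the length identity and the constraint on $x_{k+1}+y_{k+1}+z_{k+1}$. Once those points are settled, the rest is the same independence computation already used for Theorem~\ref{thm 1}.
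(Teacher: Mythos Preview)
Your proposal is correct and follows essentially the same approach as the paper's own proof: the identical block decomposition into $O_t,J_t,T_t$ plus a terminal B-run, the same translation of $\{E_n^k=x\}$ into the constraint on $x_{k+1}+y_{k+1}+z_{k+1}$ (with the $i=k$ adjustment), the multinomial count of orderings, and the independence computation. If anything, your write-up is more explicit than the paper's about why the left-to-right cutting is a bijection and why the terminal run must be handled separately; the paper simply displays the ``typical element'' picture and proceeds directly to the multinomial coefficient and probability.
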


\begin{proof}
Let  ${\underbrace{B\cdots B}_{t-1}}A=O_{t}$,\quad ${\underbrace{B\cdots B}_{t-1}}C=J_{t}$,\quad ${\underbrace{B\cdots B}_{t-1}}D=T_{t}$,\ for $t=1, 2, \ldots, n-x(k+1)$. A typical element of the event $\left\{E_n^k=x\right\}$ is a sequence
\begin{equation*}\label{eq: 1.1}
\begin{split}
\cdots {\boxed{ R_{t}}} \cdots \overset{1}{\boxed{O_{k+1}\ or\  J_{k+1}\ or\ T_{k+1}}} \cdots  {\boxed{ R_{t}}} \cdots\overset{x}{\boxed{O_{k+1}\ or\ J_{k+1}\ or\ T_{k+1}}} \cdots {\boxed{ R_{t}}} \cdots\underbrace{B\ldots B}_{i}
\end{split},
\end{equation*}
\noindent
 where $0\leq i\leq n-x(k+1)$, and ${\boxed{ R_{t}}}$ represents any string $O_{t}$, $J_{t}$, and $T_{t}$ appearing $x_{t}$, $y_{t}$, and $z_{t}$ times, respectively, in the sequence, satisfying
 \noindent
 \begin{equation}\label{eq: 1.1}
\sum_{t=1}^{n-x(k+1)}t(x_{t}+y_{t}+z_{t})=n-i,
\end{equation}
subject to the condition $x_{k+1}+y_{k+1}+z_{k+1}=
\left\{
  \begin{array}{ll}
    x & \text{if $i\neq k,$} \\
    x-1 & \text{if $i=k$.}\\
  \end{array}
\right.$
\noindent
The number of different ways of arranging the sequence equals
\begin{equation*}\label{eq:bn}
\begin{split}
 {x_{1}+\cdots +x_{n-x(k+1)}+y_{1}+\cdots+y_{n-x(k+1)}+z_{1}+\cdots+z_{n-x(k+1)}   \choose   x_{1},\ldots,x_{n-x(k+1)},y_{1},\ldots,y_{n-x(k+1)},z_{1},\ldots,z_{n-x(k+1)}}.
\end{split}
\end{equation*}
\noindent
Because of the independence of the trials, the sequence has probability

\begin{equation*}\label{eq:bn}
\begin{split}
&P_{a}^{x_{1}+\cdots +x_{n-x(k+1)}} P_{c}^{y_{1}+\cdots +y_{n-x(k+1)}}P_{d}^{z_{1}+\cdots +z_{n-x(k+1)}}\\
&\times P_{b}^{(x_{2}+y_{2}+z_{2})+2(x_{3}+y_{3}+z_{3})+\cdots +\{n-x(k+1)-1\}(x_{n-x(k+1)}+y_{n-x(k+1)}+z_{n-x(k+1)})}.
\end{split}
\end{equation*}
\noindent
The probability of the run of B's of length $i\in \{0,1,\ldots,n-x(k+1)\}$ at the end of each of these sequences is $P_{b}^{i}$, which leads to the overall probability
\noindent
\begin{equation*}\label{eq: 1.1}
\begin{split}
&P_{a}^{x_{1}+\cdots +x_{n-x(k+1)}} P_{c}^{y_{1}+\cdots +y_{n-x(k+1)}}P_{d}^{z_{1}+\cdots +z_{n-x(k+1)}}\\
&\times P_{b}^{(x_{2}+y_{2}+z_{2})+2(x_{3}+y_{3}+z_{3})+\cdots +\{n-x(k+1)-1\}(x_{n-x(k+1)}+y_{n-x(k+1)}+z_{n-x(k+1)})+i}.
\end{split}
\end{equation*}
\noindent
Summing over $i=0,1,\ldots,n-x(k+1)$, the result follows.
\end{proof}

\section{Waiting time distributions}
\label{section3}
 We consider a sequence $X_{1}, X_{2}, \ldots$ of multistate trials defined on the state space $\mathscr{S}=\{A,\ B,\ C,\ D\}$ with probabilities $P(A)=P_{a}$, $P(B)=P_{b}$, $P(C)=P_{c}$, and $P(D)=P_{d},$ such that $P_{a}+P_{b}+P_{c}+P_{d}=1$.
In this section, we obtain PMFs for the geometric distribution of order $k$ and the negative binomial distribution of order $k$, by employing combinatorial analysis.

\subsection{Geometric distribution of order $k$}
 First, we consider the waiting time $W_{1}^{k}$ for the first occurrence of a run of B's of length $k$. We establish the PMF of the random variable $W_{1}^{k}$ using combinatorial analysis.
\begin{theorem}
\label{thm3}
The PMF of $W_{1}^{k}$ is given by
\begin{equation*}\label{eq: 1.1}
\begin{split}
P(W_{1}^{k}=n)=P_{b} ^{n}\sum_{\star}{\sum_{t=1}^{k}(x_{t}+y_{t}+z_{t})   \choose   x_{1},\ldots,x_{k},y_{1},\ldots,y_{k},z_{1},\ldots,z_{k}}\left(\frac{P_{a}}{P_{b}}\right)^{\sum_{t=1}^{k} x_{t}} \left(\frac{P_{c}}{P_{b}}\right)^{\sum_{t=1}^{k} y_{t}} \left(\frac{P_{d}}{P_{b}}\right)^{\sum_{t=1}^{k} z_{t}},
\end{split}
\end{equation*}
where the inner summation $\sum_{\star}$ is over all nonnegative integers $x_{1},\ldots,x_{k}, y_{1},\ldots,y_{k}, z_{1},\ldots,z_{k}$ for which
$\sum_{t=1}^{k} t(x_{t}+y_{t}+z_{t})=n-k$.
\end{theorem}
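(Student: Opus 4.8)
The plan is to run the same block-decomposition argument used for Theorem~\ref{thm 1}, now specialized to a single terminating run and with no trailing partial run of $B$'s, so that the outer summation over $i$ disappears.

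First I would describe a canonical decomposition of a generic sequence in the event $\{W_1^k=n\}$. On this event the last $k$ trials satisfy $X_{n-k+1}=\cdots=X_n=B$, and, when $n>k$, the trial $X_{n-k}$ must take a value in $\{A,C,D\}$ while the prefix $X_1\cdots X_{n-k}$ contains no run of $k$ consecutive $B$'s; otherwise an occurrence of $B^k$ would already end at some position before $n$. Cutting that prefix immediately after each non-$B$ symbol splits it uniquely into blocks of the form $\underbrace{B\cdots B}_{t-1}A=O_t$, $\underbrace{B\cdots B}_{t-1}C=J_t$, $\underbrace{B\cdots B}_{t-1}D=T_t$ with $1\le t\le k$; let $x_t,y_t,z_t$ count the blocks of each type. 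Since $O_t,J_t,T_t$ all have length $t$, the requirement that the prefix have length $n-k$ is exactly the constraint $\sum_{t=1}^k t(x_t+y_t+z_t)=n-k$, i.e.\ the index set of $\sum_{\star}$. Conversely, concatenating any ordering of such blocks and appending $\underbrace{B\cdots B}_{k}$ yields a sequence in $\{W_1^k=n\}$, and distinct orderings give distinct sequences; this equivalence is the one place where the argument must be stated carefully.

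Next I would count and weight. For fixed $(x_t,y_t,z_t)$ the number of orderings of the prefix blocks is $\binom{\sum_{t=1}^k(x_t+y_t+z_t)}{x_1,\ldots,x_k,y_1,\ldots,y_k,z_1,\ldots,z_k}$, and every resulting length-$n$ sequence contains exactly $\sum_t x_t$ letters $A$, $\sum_t y_t$ letters $C$, $\sum_t z_t$ letters $D$, and hence $n-\sum_t(x_t+y_t+z_t)=\sum_t(t-1)(x_t+y_t+z_t)+k$ letters $B$. By independence its probability is $P_a^{\sum_t x_t}P_c^{\sum_t y_t}P_d^{\sum_t z_t}P_b^{n-\sum_t(x_t+y_t+z_t)}=P_b^{n}(P_a/P_b)^{\sum_t x_t}(P_c/P_b)^{\sum_t y_t}(P_d/P_b)^{\sum_t z_t}$. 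Multiplying by the multinomial count and summing over all $(x_t,y_t,z_t)$ satisfying $\sum_{t=1}^k t(x_t+y_t+z_t)=n-k$ yields the asserted PMF; unlike in Theorem~\ref{thm 1}, no extra summation is needed, since the sequence ends precisely with the first full $B$-run.

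Finally I would record the degenerate check $n=k$: the prefix is empty, all $x_t=y_t=z_t=0$, the multinomial coefficient equals $1$, and $P(W_1^k=k)=P_b^{k}$, as expected. The main obstacle—indeed essentially the only nonroutine point—is the set equivalence used above, namely that $\{W_1^k=n\}$ coincides with the set of sequences consisting of a prefix that avoids a length-$k$ $B$-run and ends in a non-$B$ letter, followed by $B^k$; once that is in hand, the remainder is the same bookkeeping as in the proof of Theorem~\ref{thm 1}.
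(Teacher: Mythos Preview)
Your proposal is correct and follows essentially the same block-decomposition argument as the paper: defining the blocks $O_t,J_t,T_t$ for $1\le t\le k$, characterizing $\{W_1^k=n\}$ as an arrangement of such blocks followed by $B^k$ under the length constraint $\sum_t t(x_t+y_t+z_t)=n-k$, and then computing the multinomial count times the probability weight. If anything, your write-up is slightly more careful than the paper's in making the bijection explicit and in checking the boundary case $n=k$.
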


\begin{proof}
Let  ${\underbrace{B\cdots B}_{t-1}}A=O_{t}$, ${\underbrace{B\cdots B}_{t-1}}C=J_{t}$, ${\underbrace{B\cdots B}_{t-1}}D=T_{t}$, $t=1,\ldots,k$. A typical element of the event $\left\{W_{1}^{k}=x\right\}$ is a sequence
\noindent
\begin{equation*}
\cdots \cdots\cdots\cdots {\boxed{ R_{t}}} \cdots \cdots\cdots\cdots  {\boxed{{\underbrace{B\ldots B}_{k}}}},
\end{equation*}
\noindent
where ${\boxed{ R_{t}}}$ represents any string $O_{t}$, $J_{t}$, and $T_{t}$ appearing $x_{t}$, $y_{t}$, and $z_{t}$ times, respectively, in the sequence, satisfying $\sum_{t=1}^{k} t(x_{t}+y_{t}+z_{t})=n-k$.
The number of different ways of arranging the sequence equals
\begin{equation}\label{eq:bn}
\begin{split}
{\sum_{t=1}^{k} (x_{t}+y_{t}+z_{t})   \choose   x_{1},\ldots,x_{k},y_{1},\ldots,y_{k},z_{1},\ldots,z_{k}}.\end{split}
\end{equation}
\noindent
Because of the independence of the trials, the probability of the sequence is
\noindent
\begin{equation}\label{eq:bn}
\begin{split}
P_{a}^{\sum_{t=1}^{k} x_{t}} P_{b}^{\sum_{t=1}^{k} (t-1)(x_{t}+y_{t}+z_{t})} P_{c}^{\sum_{t=1}^{k} y_{t}} P_{d}^{\sum_{t=1}^{k} z_{t}}.
\end{split}
\end{equation}
\noindent
The probability of the run of B's of length $k$ at the end of each these sequences is $P_{b}^{k}$, which leads to the overall probability
\begin{equation*}\label{eq: 1.1}
\begin{split}
P_{a}^{\sum_{t=1}^{k} x_{t}} P_{b}^{\sum_{t=1}^{k} (t-1)(x_{t}+y_{t}+z_{t})+k} P_{c}^{\sum_{t=1}^{k} y_{t}} P_{d}^{\sum_{t=1}^{k} z_{t}}.
\end{split}
\end{equation*}
\end{proof}

\begin{remark}

For $P_{a}=q$, $P_{b}=p$ such that $p+q=1$, $P_{c}=P_{d}=0$ and $y_{1},\ldots,y_{k},z_{1},\ldots,z_{k}=0$, Theorem \ref{thm3} reduces to Theorem 3.1 of \citet{philippou82}.
\end{remark}


\subsection{Negative bimomial distribution of order $k$}
Let $W_{r}^{k}$ be the random variable denoting the waiting time for the $r$-th occurrence of a run of B's of length $k$. We establish the PMF of random variables $W_{r}^{k}$ using combinatorial analysis.

\begin{theorem}
\label{thm4}
The PMF of $W_{r}^{k}$ in $n$ four state i.i.d. trials is given by
\begin{equation*}\label{eq: 1.1}
\begin{split}
P(W_{r}^{k}=n)=P_{b} ^{n}\sum_{\star}{\sum_{t=1}^{k} (x_{t}+y_{t}+z_{t})+r-1   \choose   x_{1},\ldots,x_{k},y_{1},\ldots,y_{k},z_{1},\ldots,z_{k},r-1}\left(\frac{P_{a}}{P_{b}}\right)^{\sum_{t=1}^{k} x_{t}} \left(\frac{P_{c}}{P_{b}}\right)^{\sum_{t=1}^{k} y_{t}} \left(\frac{P_{d}}{P_{b}}\right)^{\sum_{t=1}^{k} z_{t}},
\end{split}
\end{equation*}
where the inner summation $\sum_{\star}$ is over all nonnegative integers $x_{1},\ldots,x_{k}$, $y_{1},\ldots,y_{k}$, $z_{1},\ldots,z_{k}$ for which
$\sum_{t=1}^{k} t(x_{t}+y_{t}+z_{t})+kr=n$.
\end{theorem}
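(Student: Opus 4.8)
The plan is to argue directly and combinatorially, in exactly the manner of the proof of Theorem~\ref{thm3}, the only new feature being the $r-1$ additional copies of the block $\underbrace{B\cdots B}_{k}$ that now appear in a typical sequence. Reusing the atoms $O_{t}=\underbrace{B\cdots B}_{t-1}A$, $J_{t}=\underbrace{B\cdots B}_{t-1}C$, $T_{t}=\underbrace{B\cdots B}_{t-1}D$ for $1\le t\le k$, I would first show that every sequence $X_{1}\cdots X_{n}$ in the event $\{W_{r}^{k}=n\}$ admits a \emph{unique} left-to-right parse into a string of such atoms together with exactly $r$ blocks $\underbrace{B\cdots B}_{k}$, the last block occupying positions $n-k+1,\ldots,n$. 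The parse is the greedy one that scans the $B$'s, emits an atom $O_{j+1}/J_{j+1}/T_{j+1}$ when a run of $j\ (<k)$ $B$'s is terminated by $A/C/D$, and emits a block and resets the counter whenever $k$ consecutive $B$'s accumulate --- which is precisely the nonoverlapping enumeration rule. The fact that $W_{r}^{k}=n$ (and not some smaller value) forces the $r$-th block to close exactly at position $n$, so there is neither a trailing run of $B$'s of length $<k$ nor a trailing non-$B$ atom after it; this is why, in contrast with Theorems~\ref{thm 1} and \ref{thm 2}, there is no outer summation over a tail length.

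Letting $x_{t},y_{t},z_{t}$ count the atoms $O_{t},J_{t},T_{t}$, the length bookkeeping gives $\sum_{t=1}^{k}t(x_{t}+y_{t}+z_{t})+kr=n$, the stated constraint, and conversely every arrangement consistent with it, with the $r$-th block placed last, yields a sequence in $\{W_{r}^{k}=n\}$ --- adjacent blocks, or a block following an atom ending in $B$'s, create neither spurious nor missed run completions under the nonoverlapping rule --- so the parse is a bijection. Fixing the final block, the remaining $r-1$ indistinguishable blocks and the $\sum_{t=1}^{k}(x_{t}+y_{t}+z_{t})$ atoms may be permuted freely, which accounts for the multinomial coefficient $\binom{\sum_{t=1}^{k}(x_{t}+y_{t}+z_{t})+r-1}{x_{1},\ldots,x_{k},y_{1},\ldots,y_{k},z_{1},\ldots,z_{k},\,r-1}$. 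For the probability of one such arrangement, independence gives $P_{a}^{\sum_{t}x_{t}}P_{c}^{\sum_{t}y_{t}}P_{d}^{\sum_{t}z_{t}}\,P_{b}^{\sum_{t}(t-1)(x_{t}+y_{t}+z_{t})+kr}$, and substituting $\sum_{t}(t-1)(x_{t}+y_{t}+z_{t})=n-kr-\sum_{t}(x_{t}+y_{t}+z_{t})$ rewrites this as $P_{b}^{\,n}(P_{a}/P_{b})^{\sum_{t}x_{t}}(P_{c}/P_{b})^{\sum_{t}y_{t}}(P_{d}/P_{b})^{\sum_{t}z_{t}}$. Multiplying by the count and summing over all admissible $(x_{t},y_{t},z_{t})$ gives the asserted formula.

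I expect the only delicate point to be the uniqueness and bijectivity of the parse: one must verify that the greedy decomposition coincides with the nonoverlapping counting convention when blocks abut one another or abut an atom carrying a $B$-prefix, and that ``$=n$'' really forces the last atom to be a block ending at position $n$ (and so there is no tail term). Once this is pinned down, the rest is the same routine computation as in Theorem~\ref{thm3}. One could alternatively start from the convolution identity $W_{r}^{k}=\sum_{i=1}^{r}X_{i}$ with the $X_{i}$ i.i.d.\ geometric of order $k$ and convolve $r$ copies of the PMF of Theorem~\ref{thm3}, but merging the resulting multinomial coefficients via a Vandermonde-type identity is messier than the direct count above, so I would keep the direct argument.
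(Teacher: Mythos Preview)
Your proposal is correct and follows essentially the same combinatorial decomposition as the paper: the same atoms $O_t,J_t,T_t$, the same typical sequence with $r$ copies of the $B^k$-block and the last one pinned at the end, the same multinomial count with the $r-1$ free blocks, and the same probability computation. Your write-up is in fact more careful than the paper's, which simply asserts the form of a typical element without discussing uniqueness of the parse or why no trailing $B$-tail appears; your explicit treatment of the greedy/nonoverlapping bijection and of the boundary cases (abutting blocks, etc.) fills in details the paper leaves implicit.
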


\begin{proof}
Let  ${\underbrace{B\cdots B}_{t-1}}A=O_{t}$, ${\underbrace{B\cdots B}_{t-1}}C=J_{t}$, ${\underbrace{B\cdots B}_{t-1}}D=T_{t}$, $t=1,\ldots,k$. A typical element of the event $\left\{W_{r}^{k}=x\right\}$ is a sequence
\noindent
\begin{equation*}
\cdots {\boxed{ R_{t}}} \cdots \overset{1}{\boxed{{\underbrace{B\dots B}_{k}}}} \cdots {\boxed{ R_{t}}} \cdots \overset{2}{\boxed{{\underbrace{B\ldots B}_{k}}}} \cdots {\boxed{ R_{t}}} \cdots \overset{r-1}{\boxed{{\underbrace{B\ldots B}_{k}}}} \cdots {\boxed{ R_{t}}} \cdots\overset{r}{\boxed{{\underbrace{B\ldots B}_{k}}}},
\end{equation*}
\noindent
where ${\boxed{ R_{t}}}$ represents any string $O_{t}$, $J_{t}$, and $T_{t}$ appearing $x_{t}$, $y_{t}$, and $z_{t}$ times, respectively, in the sequence, satisfying $\sum_{t=1}^{k} t(x_{t}+y_{t}+z_{t})+kr=n$.
The number of different ways of arranging the sequence equals
\noindent
\begin{equation}\label{eq:bn}
\begin{split}
{\sum_{t=1}^{k} (x_{t}+y_{t}+z_{t})+r-1   \choose   x_{1},\ldots,x_{k},y_{1},\ldots,y_{k},z_{1},\ldots,z_{k},r-1}.
\end{split}
\end{equation}
\noindent
Because of the independence of the trials, the probability of the sequence is
\noindent
\begin{equation}\label{eq:bn}
\begin{split}
P_{a}^{\sum_{t=1}^{k} x_{t}} P_{b}^{\sum_{t=1}^{k} (t-1)(x_{t}+y_{t}+z_{t})+k(r-1)} P_{c}^{\sum_{t=1}^{k} y_{t}} P_{d}^{\sum_{t=1}^{k} z_{t}}.
\end{split}
\end{equation}
\noindent
The probability of the run of B's of length $k$ at the end of each of these sequences is $P_{b}^{k}$, which leads to the overall probability
\noindent
\begin{equation*}\label{eq: 1.1}
\begin{split}
P_{a}^{\sum_{t=1}^{k} x_{t}} P_{b}^{\sum_{t=1}^{k} (t-1)(x_{t}+y_{t}+z_{t})+kr} P_{c}^{\sum_{t=1}^{k} y_{t}} P_{d}^{\sum_{t=1}^{k} z_{t}}.
\end{split}
\end{equation*}
\end{proof}

\begin{remark}
For $P_{a}=q$, $P_{b}=p$ such that $p+q=1$, $P_{c}=P_{d}=0$ and $y_{1},\ldots,y_{k},z_{1},\ldots,z_{k}=0$, Theorem \ref{thm4} reduces to Theorem 3.1 (a) of \citet{philippou84}.
\end{remark}


\section{Distributions of run lengths}\label{section4}

We consider a sequence $X_{1}, X_{2}, \ldots, X_{n}$ of multistate trials defined on the state space $\mathscr{S}=\{A,\ B,\ C,\ D\}$ with probabilities $P(A)=P_{a}$, $P(B)=P_{b}$, $P(C)=P_{c}$ and $P(D)=P_{d},$ such that $P_{a}+P_{b}+P_{c}+P_{d}=1,$. In this section, we obtain the PMFs of the distribuions of the longest and shortest runs and establish the PMFs and their joint distributions using combinatorial analysis.

\subsection{Distribution of the longest run length}

Let $L_{n}$ be the maximum length of a run of B's in $n$ four state i.i.d.\ trials, which is called longest run statistics. We establish the PMF of the random variable $L_{n}$ using combinatorial analysis.

\begin{theorem}
\label{thm 5}
The PMF of $L_{n}$ is given by
\begin{equation*}\label{eq: 1.1}
\begin{split}
P(L_{n}=\ell)=P_{b} ^{n}\sum_{i=0}^{\ell}\sum_{\star}{\sum_{t=1}^{\ell+1} (x_{t}+y_{t}+z_{t})  \choose   x_{1},\ldots,x_{\ell+1},y_{1},\ldots,y_{\ell+1},z_{1},\ldots,z_{\ell+1}}\left(\frac{P_{a}}{P_{b}}\right)^{\sum_{t=1}^{\ell+1} x_{t}} \left(\frac{P_{c}}{P_{b}}\right)^{\sum_{t=1}^{\ell+1} y_{t}} \left(\frac{P_{d}}{P_{b}}\right)^{\sum_{t=1}^{\ell+1} z_{t}},
\end{split}
\end{equation*}
\noindent
where the inner summation $\sum_{\star}$ is over all nonnegative integers $x_{1},\ldots,x_{\ell+1}$, $y_{1},\ldots,y_{\ell+1}$, $z_{1},\ldots,z_{\ell+1}$ such that $\sum_{t=1}^{\ell+1} t(x_{t}+y_{t}+z_{t}) =n-i$, for $0 \leq i\leq \ell,$ and satisfying at least one of the conditions $x_{\ell+1}\geq 1$, $y_{\ell+1}\geq 1$, $z_{\ell+1}\geq 1$, and $i=\ell.$
\end{theorem}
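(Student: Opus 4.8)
The plan is to mirror the block-decomposition arguments behind Theorems~\ref{thm 1}--\ref{thm4}, changing only the admissible block lengths and the side conditions so that the \emph{maximal} run of B's is pinned to $\ell$. First I would introduce, for $1\le t\le \ell+1$, the strings $O_t=\underbrace{B\cdots B}_{t-1}A$, $J_t=\underbrace{B\cdots B}_{t-1}C$, $T_t=\underbrace{B\cdots B}_{t-1}D$; truncating the index at $\ell+1$ is exactly what forces every maximal run of B's terminated by one of $A,C,D$ to have length at most $\ell$. Any finite string over $\mathscr{S}$ factors uniquely as a concatenation of such blocks followed by a terminal run $B^{i}$ of B's succeeded by nothing, and on the event $\{L_n=\ell\}$ this terminal run satisfies $0\le i\le \ell$.

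Next I would set $x_t,y_t,z_t$ equal to the number of copies of $O_t,J_t,T_t$ and record the two constraints governing such a string. Length bookkeeping gives $\sum_{t=1}^{\ell+1}t(x_t+y_t+z_t)+i=n$. Demanding that the longest run be \emph{exactly} $\ell$ rather than merely $\le\ell$ means some run of B's actually has length $\ell$; since the only blocks carrying a run of $\ell$ B's are $O_{\ell+1},J_{\ell+1},T_{\ell+1}$ and the terminal run has length $i$, this is precisely the requirement that at least one of $x_{\ell+1}\ge1$, $y_{\ell+1}\ge1$, $z_{\ell+1}\ge1$, $i=\ell$ hold. With the multiset of blocks fixed, the number of orderings is the multinomial coefficient whose top entry is $\sum_{t=1}^{\ell+1}(x_t+y_t+z_t)$ and whose bottom entries are $x_1,\ldots,x_{\ell+1},y_1,\ldots,y_{\ell+1},z_1,\ldots,z_{\ell+1}$; the terminal $B^{i}$ is pinned at the end and adds nothing to the count.

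The probability computation is then the same routine as in the earlier proofs: by independence one such string has probability $P_a^{\sum x_t}P_c^{\sum y_t}P_d^{\sum z_t}P_b^{\sum(t-1)(x_t+y_t+z_t)+i}$, and since $\sum_{t=1}^{\ell+1}(t-1)(x_t+y_t+z_t)+\sum_t(x_t+y_t+z_t)+i=\sum_{t=1}^{\ell+1}t(x_t+y_t+z_t)+i=n$, this collapses to $P_b^{n}(P_a/P_b)^{\sum x_t}(P_c/P_b)^{\sum y_t}(P_d/P_b)^{\sum z_t}$. Summing over all $(x_t,y_t,z_t)$ meeting the two constraints and over $i=0,1,\ldots,\ell$ then yields the asserted PMF.

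The step I expect to be the real obstacle is not any of the algebra but the combinatorial bijection: I must verify that the decomposition is genuinely one-to-one onto $\{L_n=\ell\}$, that the side condition separates $L_n=\ell$ from $L_n\le\ell-1$ with no overlap or omission, and that the degenerate cases are swept up correctly --- the all-B string (where $\ell=n$, every $x_t=y_t=z_t=0$, and $i=n=\ell$) and the B-free string (where $\ell=0$, only the $t=1$ blocks occur, and $i=0=\ell$). Once that correspondence is nailed down, the rest is the multinomial-coefficient bookkeeping already established in Theorems~\ref{thm 1}--\ref{thm4}.
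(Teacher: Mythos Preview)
Your proposal is correct and follows essentially the same block-decomposition argument as the paper: define the blocks $O_t,J_t,T_t$ for $t\le \ell+1$, parse each outcome as a multiset of such blocks followed by a terminal run $B^i$ with $0\le i\le \ell$, impose the length constraint and the ``at least one of $x_{\ell+1},y_{\ell+1},z_{\ell+1}\ge 1$ or $i=\ell$'' condition, count orderings by the multinomial coefficient, compute the probability, and sum over $i$. Your write-up is in fact a bit more careful than the paper's in making the bijection and the degenerate cases ($\ell=n$ and $\ell=0$) explicit, but the method is the same.
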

\begin{proof}
Let  ${\underbrace{B\cdots B}_{t-1}}A=O_{t}$,\quad ${\underbrace{B\cdots B}_{t-1}}C=J_{t}$,\quad ${\underbrace{B\cdots B}_{t-1}}D=T_{t}$,\ where $t=1,\ldots,$ $\min(\ell+1,n)$.
A typical element of the event $\left\{L_{n}=\ell\right\}$ is a sequence
\begin{equation*}\label{eq: 1.1}
\begin{split}
\cdots {\boxed{ R_{t}}} \cdots\underbrace{B\cdots B}_{i},
\end{split}
\end{equation*}
 where ${\boxed{ R_{t}}}$ represents any of the strings $O_{t}$, $J_{t}$, and $T_{t}$ appearing $x_{t}$, $y_{t}$, and $z_{t}$ times, respectively, in the sequence, satisfying $\sum_{t=1}^{\ell+1} t(x_{t}+y_{t}+z_{t}) =n-i$, for $0\leq i\leq \ell,$ and satisfying at least one of the conditions: $x_{\ell+1}\geq 1$, $y_{\ell+1}\geq 1$, $z_{\ell+1}\geq 1$, and $i=\ell.$
\noindent
The number of different ways of arranging the sequence equals
\noindent
\begin{equation}\label{eq:bn}
\begin{split}
{\sum_{t=1}^{\ell+1} (x_{t}+y_{t}+z_{t})   \choose   x_{1},\ldots,x_{\ell+1},y_{1},\ldots,y_{\ell+1},z_{1},\ldots,z_{\ell+1}}.
\end{split}
\end{equation}
\noindent
Because of the independence of the trials, the probability of the above sequence is
\noindent
\begin{equation}\label{eq:bn}
\begin{split}
P_{a}^{\sum_{t=1}^{\ell+1} x_{t}} P_{b}^{\sum_{t=1}^{\ell+1} (t-1)(x_{t}+y_{t}+z_{t})} P_{c}^{\sum_{t=1}^{\ell+1} y_{t}} P_{d}^{\sum_{t=1}^{\ell+1} z_{t}}.
\end{split}
\end{equation}
\noindent
The probability of the run of B's of length $i$ $(0\leq i\leq \ell)$ at the end of each of these sequences is $P_{b}^{i}$, which leads to the overall probability
\begin{equation*}\label{eq: 1.1}
\begin{split}
P_{a}^{\sum_{t=1}^{\ell+1} x_{t}} P_{b}^{\sum_{t=1}^{\ell+1} (t-1)(x_{t}+y_{t}+z_{t})+i} P_{c}^{\sum_{t=1}^{\ell+1} y_{t}} P_{d}^{\sum_{t=1}^{\ell+1} z_{t}}.
\end{split}
\end{equation*}
\noindent
Summing over $i=0,1,\ldots,\ell$ the result follows.
\end{proof}


\subsection{Distributions of maximum length and number of times it appears}
Let $L_{n}$ denote the maximum length of runs of B's and $NL_{n}$ the number of times a run of length $L_{n}$ appears in a sequence of size $n$. We establish the joint PMF of the random variables $L_{n}$ and $NL_{n}$ using combinatorial analysis.

\begin{theorem}
\label{thm 6}
The joint PMF of $L_{n}$ and $NL_{n}$ is given by
\begin{equation*}\label{eq: 1.1}
\begin{split}
P(L_{n}=\ell \wedge NL_{n}=x)=P_{b}^{n}\sum_{i=0}^{\ell}\sum_{\star}
\Bigg[&{x_{1}+\cdots +x_{\ell+1}+y_{1}+\cdots+y_{\ell+1}+z_{1}+\cdots+z_{\ell+1} \choose   x_{1},\ldots,x_{\ell+1},y_{1},\ldots,y_{\ell+1},z_{1},\ldots,z_{\ell+1}}\\
&\times
\left(\frac{P_{a}}{P_{b}}\right)^{x_{1}+\cdots +x_{\ell+1}} \left(\frac{P_{c}}{P_{b}}\right)^{y_{1}+\cdots+y_{\ell+1}} \left(\frac{P_{d}}{P_{b}}\right)^{z_{1}+\cdots+z_{\ell+1}}\Bigg],
\end{split}
\end{equation*}
\noindent
where the inner summation $\sum_{\star}$ is over all nonnegative integers $ x_{1}, \ldots, x_{\ell+1}, y_{1}, \ldots ,y_{\ell+1}, z_{1}, \ldots, z_{\ell+1}$ for which $\sum_{t=1}^{\ell+1}t(x_{t}+y_{t}+z_{t})=n-i,$ and satisfying at least a conditions
\noindent
\begin{equation*}\label{eq: 1.1}
\begin{split}
\left\{
  \begin{array}{ll}
    x_{\ell+1}\geq 1 \ \text{or} \\
    y_{\ell+1}\geq 1 \ \text{or} \\
    z_{\ell+1}\geq 1 \ \text{or} \\
    i=\ell\ \text{and}\ 0\leq i\leq \min(\ell, n-x(\ell+1)),\\
  \end{array}
\right.
\end{split}
\end{equation*}
\noindent
subject to
\noindent
\begin{equation*}\label{eq: 1.1}
\begin{split}
x_{\ell+1}+y_{\ell+1}+z_{\ell+1}=
\left\{
  \begin{array}{ll}
    x & \text{if $i\neq \ell,$} \\
    x-1 & \text{if $i=\ell$.}\\
  \end{array}
\right.
\end{split}
\end{equation*}

\end{theorem}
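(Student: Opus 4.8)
The plan is to follow the block-decomposition scheme already used in the proofs of Theorems~\ref{thm 1}, \ref{thm 2} and \ref{thm 5}, refined so as to track simultaneously the longest $B$-run length $\ell=L_n$ and the number $x=NL_n$ of maximal $B$-runs of that exact length. Fix $\ell\ge 1$ and $x\ge 1$ and introduce the blocks $O_t=B^{t-1}A$, $J_t=B^{t-1}C$, $T_t=B^{t-1}D$ for $1\le t\le \ell+1$. The starting point is the observation, already implicit in the earlier proofs, that every word of length $n$ over $\{A,B,C,D\}$ factors \emph{uniquely} as a concatenation of blocks $R_t$ ($t\ge 1$) followed by a terminal, possibly empty, run $B^i$: cut the word immediately after each non-$B$ letter. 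Under this factorization a maximal $B$-run of length $m$ appears either as the $B$-prefix of a block $R_{m+1}$, or, if it sits at the very end of the word, as the terminal run $B^i$ with $i=m$.

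Next I would translate the event $\{L_n=\ell\wedge NL_n=x\}$ into conditions on the multiplicities. Writing $x_t,y_t,z_t$ for the numbers of occurrences of $O_t,J_t,T_t$, the requirement $L_n\le\ell$ is exactly that no block $R_t$ with $t\ge\ell+2$ occurs and that $i\le\ell$ --- this is why the summation ranges over $t=1,\dots,\ell+1$ and $0\le i\le\ell$. The requirement $L_n\ge\ell$ is that at least one maximal $B$-run has length $\ell$, i.e.\ at least one of $x_{\ell+1},y_{\ell+1},z_{\ell+1}$ is positive or $i=\ell$; this is the displayed ``at least one of the conditions'' clause. Finally, the number of maximal $B$-runs of length exactly $\ell$ is $x_{\ell+1}+y_{\ell+1}+z_{\ell+1}$ if $i\ne\ell$ and $x_{\ell+1}+y_{\ell+1}+z_{\ell+1}+1$ if $i=\ell$; setting this equal to $x$ gives precisely the case split $x_{\ell+1}+y_{\ell+1}+z_{\ell+1}=x$ (resp.\ $x-1$). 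Since all these runs are maximal they are pairwise nonoverlapping, so the distinction between overlapping and nonoverlapping counting is vacuous here. Counting letters, a block $R_t$ occupies $t$ positions and the tail occupies $i$, giving $\sum_{t=1}^{\ell+1}t(x_t+y_t+z_t)=n-i$.

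The remaining bookkeeping is routine and identical in spirit to the earlier theorems. For a fixed admissible multiplicity vector and fixed $i$, the number of distinct arrangements of the blocks is the multinomial coefficient displayed in the statement, and by independence each such arrangement has probability $P_a^{\sum x_t}P_c^{\sum y_t}P_d^{\sum z_t}P_b^{\sum(t-1)(x_t+y_t+z_t)}$ for the blocks times $P_b^{\,i}$ for the terminal run. Using $\sum t(x_t+y_t+z_t)=n-i$ to rewrite the $P_b$-exponent as $n-\sum(x_t+y_t+z_t)$ turns this into $P_b^{\,n}(P_a/P_b)^{\sum x_t}(P_c/P_b)^{\sum y_t}(P_d/P_b)^{\sum z_t}$. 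Summing over all admissible multiplicity vectors (the inner sum $\sum_\star$) and over $i=0,1,\dots,\ell$ gives the asserted formula; the ancillary feasibility restriction $0\le i\le\min(\ell,n-x(\ell+1))$ costs nothing, since $\sum_\star$ is empty whenever the constraints are incompatible.

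The part I expect to require the most care --- the genuine obstacle rather than routine algebra --- is the exact identification of $\{L_n=\ell\wedge NL_n=x\}$ with the stated index set: one must verify that the left-to-right factorization is a bijection onto its image (unique factorization), that truncating the block index at $\ell+1$ and the tail length at $\ell$ enforces $L_n\le\ell$ with no omissions, and, most delicately, that the ``$i=\ell$'' branch accounts for the terminal maximal run once and exactly once, so that no configuration is counted twice and none is missed. As a consistency check I would confirm that summing the joint PMF over $x\ge 1$ collapses the case split on $x_{\ell+1}+y_{\ell+1}+z_{\ell+1}$ and recovers the marginal PMF of $L_n$ from Theorem~\ref{thm 5}.
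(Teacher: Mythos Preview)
Your proposal is correct and follows essentially the same block-decomposition argument as the paper's own proof: define $O_t,J_t,T_t$ for $1\le t\le \ell+1$, factor each outcome uniquely as a string of such blocks followed by a terminal $B$-run of length $i\le\ell$, translate the event $\{L_n=\ell,\ NL_n=x\}$ into the stated constraints on $(x_t,y_t,z_t)$ and $i$, and then count arrangements by the multinomial coefficient and weight by the product probability. Your write-up is in fact more careful than the paper's in spelling out why the factorization is bijective and why the $i=\ell$ branch accounts for the terminal maximal run exactly once, and your proposed consistency check against Theorem~\ref{thm 5} is a nice addition not present in the original.
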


\begin{proof}
Let  ${\underbrace{B\cdots B}_{t-1}}A=O_{t}$,\quad ${\underbrace{B\cdots B}_{t-1}}C=J_{t}$,\quad ${\underbrace{B\cdots B}_{t-1}}D=T_{t}$,\ where $1\leq t \leq \ell+1$. A typical element of the event $\left\{L_{n}=\ell \wedge  NL_{n}=x\right\}$ is a sequence
\begin{equation*}\label{eq: 1.1}
\begin{split}
\cdots {\boxed{ R_{t}}} \cdots \overset{1}{\boxed{O_{\ell+1}\ or\ J_{\ell+1}\ or\ T_{\ell+1}}} \cdots  {\boxed{ R_{t}}}  \cdots\overset{x}{\boxed{O_{\ell+1}\ or\ J_{\ell+1}\ or\ T_{\ell+1}}} \cdots {\boxed{ R_{t}}} \cdots\underbrace{B\cdots B}_{i}
\end{split},
\end{equation*}
\noindent
where $0\leq i \leq \min(\ell,n-x(\ell+1))$, and ${\boxed{ R_{t}}}$ represents any string $O_{t}$, $J_{t}$, and $T_{t}$ appearing $x_{t}$, $y_{t}$, and $z_{t}$ times, respectively, in the sequence, satisfying
\noindent
 \begin{equation}\label{eq: 1.1}
\sum_{t=1}^{\ell+1}t(x_{t}+y_{t}+z_{t})=n-i,
\end{equation}
\noindent
subject to the condition $x_{\ell+1}+y_{\ell+1}+z_{\ell+1}=
\left\{
  \begin{array}{ll}
    x & \text{if $i\neq \ell,$} \\
    x-1 & \text{if $i=\ell$.}\\
  \end{array}
\right.$
\noindent
The number of different ways of arranging the sequence equals
\noindent
\begin{equation*}\label{eq:bn}
\begin{split}
{x_{1}+\cdots +x_{\ell+1}+y_{1}+\cdots+y_{\ell+1}+z_{1}+\cdots+z_{\ell+1} \choose   x_{1},\ldots,x_{\ell+1},y_{1},\ldots,y_{\ell+1},z_{1},\ldots,z_{\ell+1}}.
\end{split}
\end{equation*}
\noindent
Because of the independence of the trials, the probability of the above sequence is
\noindent
\begin{equation}\label{eq:bn}
\begin{split}
P_{a}^{x_{1}+\cdots +x_{\ell+1}} P_{c}^{y_{1}+\cdots +y_{\ell+1}}P_{d}^{z_{1}+\cdots +z_{\ell+1}}P_{b}^{(x_{2}+y_{2}+z_{2})+2(x_{3}+y_{3}+z_{3})+\cdots +l(x_{\ell+1}+y_{\ell+1}+z_{\ell+1})}.
\end{split}
\end{equation}
\noindent
The probability of the run of B's of length $i$ for $ 0\leq i\leq n-x(\ell+1))$ at the end of each of these sequences is $P_{b}^{i}$, which leads to the overall probability.
\noindent
\begin{equation*}\label{eq: 1.1}
\begin{split}
P_{a}^{x_{1}+\cdots +x_{\ell+1}} P_{c}^{y_{1}+\cdots +y_{\ell+1}}P_{d}^{z_{1}+\cdots +z_{\ell+1}}P_{b}^{(x_{2}+y_{2}+z_{2})+2(x_{3}+y_{3}+z_{3})+\cdots +\ell(x_{\ell+1}+y_{\ell+1}+z_{\ell+1})+i}.
\end{split}
\end{equation*}
Summing over $i=0,1,\ldots,n-x(\ell+1)$ the results follows.
\end{proof}

\subsection{Distribution of the smallest run length}
Let $M_{n}$ denote the minimum length of a run of B's in $n$ i.i.d.\ four state trials. We establish the PMF of the random variable $M_{n}$ using combinatorial analysis.

%
%
%
%
%


\begin{theorem}
\label{thm 7}
The PMF of $M_{n}$, for $0 \leq s\leq n$, is given by
\begin{equation*}\label{eq: 1.1}
\begin{split}
P(M_{n}=s)=\sum_{i}\sum_{\star}&\Bigg[{x_{1}+x_{s+1}+\cdots+x_{n}+y_{1}+y_{s+1}+\cdots+y_{n}+z_{1}+z_{s+1}+\cdots+z_{n}\choose x_{1},x_{s+1},\ldots,x_{n},y_{1},y_{s+1},\ldots,y_{n},z_{1},z_{s+1},\ldots,z_{n}}\\
&\times\left(\frac{P_{a}}{P_{b}}\right)^{x_{1}+x_{s+1}+\cdots+x_{n-s}} \left(\frac{P_{c}}{P_{b}}\right)^{y_{1}+y_{s+1}+\cdots+y_{n-s}} \left(\frac{P_{d}}{P_{b}}\right)^{z_{1}+z_{s+1}+\cdots+z_{n-s}}\Bigg],
\end{split}
\end{equation*}
where the summation $\sum_{\star}$ is over all nonnegative integers $x_{1}, x_{s+1}, \ldots, x_{n}, y_{1}, y_{s+1}, \ldots, y_{n} , z_{1}, z_{s+1}, \ldots, z_{n}$ such that $(x_{1}+y_{1}+z_{1})+ (s+1)(x_{s+1}+y_{s+1}+z_{s+1})+\cdots+n(x_{n}+y_{n}+z_{n}) =n-i$, where $i\in \{0, s, s+1, \ldots, n\},$ satisfying at least one of the conditions: $x_{s+1}\geq 1$, $y_{s+1}\geq 1$, $z_{s+1}\geq 1$, and $i=s$.
\end{theorem}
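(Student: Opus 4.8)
The plan is to run the same combinatorial construction as in the proofs of Theorems~\ref{thm 5} and~\ref{thm 6}, but with the block alphabet restricted so that no maximal run of B's of length in $\{1,\dots,s-1\}$ can ever be formed. As before, set ${\underbrace{B\cdots B}_{t-1}}A=O_{t}$, ${\underbrace{B\cdots B}_{t-1}}C=J_{t}$, ${\underbrace{B\cdots B}_{t-1}}D=T_{t}$, but now allow the index $t$ to take only the value $1$ (a bare non-B letter, carrying an empty B-run) or a value in $\{s+1,s+2,\dots,n\}$ (a non-B letter preceded by a maximal B-run of length $t-1\ge s$). Every length-$n$ word over $\{A,B,C,D\}$ factors uniquely as a string of such blocks followed by a possibly empty trailing run ${\underbrace{B\cdots B}_{i}}$, and in that factorization the maximal B-runs are exactly the B-prefixes of the blocks together with the trailing run. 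Hence the length-$n$ words all of whose maximal B-runs have length $\ge s$ are precisely those whose block indices lie in $\{1\}\cup\{s+1,\dots,n\}$ and whose trailing length satisfies $i\in\{0\}\cup\{s,s+1,\dots,n\}$, subject to the length bookkeeping $(x_{1}+y_{1}+z_{1})+(s+1)(x_{s+1}+y_{s+1}+z_{s+1})+\cdots+n(x_{n}+y_{n}+z_{n})=n-i$, where $x_{t},y_{t},z_{t}$ count the occurrences of $O_{t},J_{t},T_{t}$.

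The next step is to impose that the minimum is \emph{exactly} $s$. Among the words just described, $M_{n}=s$ holds (for $s\ge1$) if and only if at least one maximal B-run has length exactly $s$, i.e.\ if and only if a block of type $O_{s+1},J_{s+1},$ or $T_{s+1}$ occurs ($x_{s+1}\ge1$ or $y_{s+1}\ge1$ or $z_{s+1}\ge1$) or the trailing run has length $s$ ($i=s$); this is precisely the four-way disjunction in the statement, and the admissible trailing lengths are $i\in\{0,s,s+1,\dots,n\}$.

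The remaining counting is routine and parallels the earlier theorems. For a fixed admissible value of $i$ and a fixed admissible tuple, the number of words realizing it equals the multinomial coefficient whose upper entry is the total number of blocks $\sum_{t}(x_{t}+y_{t}+z_{t})$ and whose lower entries are $x_{1},x_{s+1},\dots,x_{n},y_{1},y_{s+1},\dots,y_{n},z_{1},z_{s+1},\dots,z_{n}$. By independence, each such word has probability $P_{a}^{\sum_{t}x_{t}}\,P_{c}^{\sum_{t}y_{t}}\,P_{d}^{\sum_{t}z_{t}}\,P_{b}^{\,n-\sum_{t}(x_{t}+y_{t}+z_{t})}$, because the number of B's equals $n$ minus the number of non-B letters, i.e.\ minus the number of blocks. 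Factoring out $P_{b}^{n}$ rewrites this as $P_{b}^{n}(P_{a}/P_{b})^{\sum x_{t}}(P_{c}/P_{b})^{\sum y_{t}}(P_{d}/P_{b})^{\sum z_{t}}$; multiplying by the multinomial coefficient and summing over all admissible $i$ and tuples yields the claimed formula. I would also dispose of the extreme cases directly: $s=n$ forces the single word $B^{n}$ (so the only contribution is from $i=n$), while $s=0$ is read as ``no B occurs'' and is handled using type-$1$ blocks with $i=0$ only.

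The main obstacle is purely organizational, namely getting the index set and the side conditions simultaneously correct: one must (i) forbid the block indices $2,\dots,s$ and the trailing lengths $1,\dots,s-1$ in order to force $M_{n}\ge s$, (ii) add the disjunction $x_{s+1}\ge1\vee y_{s+1}\ge1\vee z_{s+1}\ge1\vee i=s$ in order to force $M_{n}\le s$, and (iii) verify that these conditions partition the event rather than overcount it. Point (iii) holds because each word has a unique block factorization, so a fixed choice of $i$ and of the tuple $(x_{t}),(y_{t}),(z_{t})$ is counted by exactly one multinomial term, regardless of how many of the four disjuncts it happens to satisfy.
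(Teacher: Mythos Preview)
Your proposal is correct and follows essentially the same approach as the paper: the same block alphabet $O_t,J_t,T_t$ restricted to indices $t\in\{1\}\cup\{s+1,\dots,n\}$, the same trailing B-run of length $i\in\{0,s,s+1,\dots,n\}$, the same multinomial count, and the same disjunction to force $M_n=s$ rather than merely $M_n\ge s$. Your write-up is in fact more careful than the paper's own proof, since you explicitly argue uniqueness of the block factorization (so no overcounting) and explain why the index restrictions and trailing-length restrictions are exactly what encodes $M_n\ge s$, whereas the paper simply asserts the typical-element picture and moves directly to the multinomial.
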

\begin{proof}
Let  ${\underbrace{B\cdots B}_{t-1}}A=O_{t}$,\quad ${\underbrace{B\cdots B}_{t-1}}C=J_{t}$,\quad ${\underbrace{B\cdots B}_{t-1}}D=T_{t}$,\ where $t=1,s+1,\ldots,n$. A typical element of the event $\left\{M_{n}=s\right\}$ is a sequence
\noindent
\begin{equation*}\label{eq: 1.1}
\begin{split}
\cdots {\boxed{ R_{t}}} \cdots\underbrace{B\cdots B}_{i}
\end{split},
\end{equation*}
\noindent
where ${\boxed{ R_{t}}}$ represents any of the strings $O_{t}$, $J_{t}$, and $T_{t}$ appearing $x_{t}$, $y_{t}$, and $z_{t}$ times, respectively, in the sequence, satisfying $(x_{1}+y_{1}+z_{1})+ (s+1)(x_{s+1}+y_{s+1}+z_{s+1})+\cdots+n(x_{n}+y_{n}+z_{n}) =n-i$, for $i\in \{0,s,s+1,\ldots,n\},$ and satisfying at least one of the conditions: $x_{s+1}\geq 1$, $y_{s+1}\geq 1$, $z_{s+1}\geq 1$, and $i=s$.
\noindent
The number of different ways of arranging the sequence equals
\noindent
\begin{equation}\label{eq:bn}
\begin{split}
{x_{1}+x_{s+1}+\cdots+x_{n}+y_{1}+y_{s+1}+\cdots+y_{n}+z_{1}+z_{s+1}+\cdots+z_{n}\choose x_{1},x_{s+1},\ldots,x_{n},y_{1},y_{s+1},\ldots,y_{n},z_{1},z_{s+1},\ldots,z_{n}}.
\end{split}
\end{equation}
\noindent
By the independence of trials, the probability of the above sequence is given by
\noindent
\begin{equation}\label{eq:bn}
\begin{split}
&P_{a}^{x_{1}+x_{s+1}+\cdots+x_{n-s}} P_{b}^{s(x_{s+1}+y_{s+1}+z_{s+1})+(s+1)(x_{s+2}+y_{s+2}+z_{s+2})+\cdots+(n-1)(x_{n}+y_{n}+z_{n})}\\ &\times P_{c}^{y_{1}+y_{s+1}+\cdots+y_{n-s}} P_{d}^{z_{1}+z_{s+1}+\cdots+z_{n-s}}.
\end{split}
\end{equation}
\noindent
The probability of a run of B's of length $i\in \{0,s,s+1,\ldots,n\}$ at the end of each of these sequences is $P_{b}^{i}$, which leads to the overall probability
\noindent
\begin{equation*}\label{eq: 1.1}
\begin{split}
&P_{a}^{x_{1}+x_{s+1}+\cdots+x_{n-s}} P_{b}^{s(x_{s+1}+y_{s+1}+z_{s+1})+(s+1)(x_{s+2}+y_{s+2}+z_{s+2})+\cdots+(n-1)(x_{n}+y_{n}+z_{n})+i}\\ &\times P_{c}^{y_{1}+y_{s+1}+\cdots+y_{n-s}} P_{d}^{z_{1}+z_{s+1}+\cdots+z_{n-s}}.
\end{split}
\end{equation*}
\noindent
Summing over $i=0,s,s+1,\ldots,n$ the result follows.
\noindent
\end{proof}

\subsection{Distributions of minimum length and number of times it appears}

Let $M_{n}$ denote the minimum length of a run of B's and $NM_{n}$ be the number of times a run of length $M_{n}$ appears in a sequence of size $n$. We establish the joint PMF of the random variables $M_n$ and $NM_{n}$ using combinatorial analysis.

\begin{theorem}
\label{thm 8}
The joint PMF of $M_n$ and $NM_{n}$, for $0\leq s\leq n$, $0\leq x\leq \left[\frac{n+1}{s+1}\right]$, is given by
\begin{equation*}\label{eq: 1.1}
\begin{split}
P(M_{n}=s \wedge  NM_{n}=x)=&P_{b}^{n}\sum_{i=0}^{l}\sum_{\star}
{x_{1}+x_{s+1}+\cdots +x_{n}+y_{1}+y_{s+1}+\cdots+y_{n}+z_{1}+z_{s+1}+\cdots+z_{n}   \choose   x_{1},x_{s+1},\ldots,x_{n},y_{1},y_{s+1},\ldots,y_{n},z_{1},z_{s+1},\ldots,z_{n}}     \\
&\times
\Big(\frac{P_{a}}{P_{b}}\Big)^{x_{1}+x_{s+1}+\cdots +x_{n}} \Big(\frac{P_{c}}{P_{b}}\Big)^{y_{1}+y_{s+1}+\cdots+y_{n}} \Big(\frac{P_{b}}{P_{d}}\Big)^{z_{1}+z_{s+1}+\cdots+z_{n}},
\end{split}
\end{equation*}
\noindent
where the inner summation $\sum_{\star}$ is over all nonnegative integers $x_{1}, x_{s+1}, \ldots, x_{n}, y_{1}, y_{s+1}, \ldots, y_{n}, z_{1}, z_{s+1}, \ldots, z_{n}$ for which $(x_{1}+y_{1}+z_{1})+(s+1)(x_{s+1}+y_{s+1}+z_{s+1})+\cdots +n(x_{n}+y_{n}+z_{n})=n-i$, for $i\in \{0, s, s+1, \ldots, n-x(s+1)\},$ while satisfying at least one of the conditions: $x_{s+1}\geq 1$, $y_{s+1}\geq 1$, $z_{s+1}\geq 1$, and $i=s,$ and subject to $x_{s+1}+y_{s+1}+z_{s+1}=
\left\{
  \begin{array}{ll}
    x & \text{if $i\neq s,$} \\
    x-1 & \text{if $i=s$.}\\
  \end{array}
\right.$
\end{theorem}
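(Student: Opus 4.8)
\textbf{Proof proposal for Theorem \ref{thm 8}.}

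The plan is to mirror the combinatorial decomposition used in Theorems \ref{thm 2}, \ref{thm 5}, and \ref{thm 6}, now tracking both the minimum run length $s$ and the number $x$ of nonoverlapping runs attaining that minimum. First I would introduce, exactly as in the earlier proofs, the building blocks ${\underbrace{B\cdots B}_{t-1}}A = O_t$, ${\underbrace{B\cdots B}_{t-1}}C = J_t$, ${\underbrace{B\cdots B}_{t-1}}D = T_t$, but this time only for $t \in \{1, s+1, s+2, \ldots, n\}$: the key structural point is that in any sequence realizing $\{M_n = s\}$, every maximal run of B's that is terminated by one of $A,C,D$ has length either $0$ (i.e.\ the blocks $O_1, J_1, T_1$) or at least $s$ (the blocks $O_t, J_t, T_t$ with $t \ge s+1$), so blocks of ``internal'' length $1, 2, \ldots, s-1$ are forbidden. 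I would write a typical element of $\{M_n = s \wedge NM_n = x\}$ as a concatenation of such blocks followed by a terminal run ${\underbrace{B\cdots B}_{i}}$ of B's, where the terminal length $i$ also must avoid the forbidden lengths, hence $i \in \{0, s, s+1, \ldots, n - x(s+1)\}$.

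Next I would impose the two constraints that encode the event precisely. The total-length equation $(x_1+y_1+z_1) + (s+1)(x_{s+1}+y_{s+1}+z_{s+1}) + \cdots + n(x_n+y_n+z_n) = n - i$ simply says the blocks plus the terminal B-run exhaust all $n$ trials. The occurrence-count constraint is that the number of blocks whose ``B-part'' has length exactly $s$ equals $x$ if $i \ne s$, and equals $x - 1$ if $i = s$ (since in the latter case the terminal run of length $s$ supplies one of the $x$ minimal runs); this is the content of $x_{s+1}+y_{s+1}+z_{s+1} = x$ or $x-1$. Finally, to force the minimum to be \emph{exactly} $s$ rather than larger, at least one minimal run of length $s$ must actually appear, which is the ``at least one of $x_{s+1} \ge 1$, $y_{s+1} \ge 1$, $z_{s+1} \ge 1$, or $i = s$'' condition. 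With the admissible configurations thus described, the number of arrangements of a fixed multiset of blocks is the displayed multinomial coefficient, and by independence the probability of any such sequence is $P_a^{x_1 + x_{s+1} + \cdots + x_n} P_c^{y_1 + y_{s+1} + \cdots + y_n} P_d^{z_1 + z_{s+1} + \cdots + z_n}$ times $P_b$ raised to the number of B's, which is $s(x_{s+1}+y_{s+1}+z_{s+1}) + (s+1)(x_{s+2}+y_{s+2}+z_{s+2}) + \cdots + (n-1)(x_n+y_n+z_n) + i$; factoring out $P_b^n$ converts the per-symbol probabilities into the ratios $(P_a/P_b)$, $(P_c/P_b)$, $(P_d/P_b)$ as in the statement. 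Summing the multinomial-weighted probabilities over all admissible block multisets (the inner sum $\sum_\star$) and over all admissible terminal lengths $i$ (the outer sum $\sum_{i=0}^{l}$) yields the formula.

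The step I expect to be the main obstacle is getting the bookkeeping of the terminal run $i$ exactly right in combination with the count constraint $NM_n = x$: one must be careful that when $i = s$ the terminal B-run is itself counted as one of the $x$ minimal runs (hence only $x-1$ are required among the internal blocks), and also that the forbidden intermediate lengths $1, \ldots, s-1$ are excluded both for the block B-parts \emph{and} for $i$ — otherwise the event $\{M_n = s\}$ is not captured, since a shorter run would lower the minimum. A secondary subtlety is the range $0 \le x \le \left[\frac{n+1}{s+1}\right]$ and the upper index $i \le n - x(s+1)$ of the terminal sum: these come from the fact that $x$ disjoint runs of B's of length $s$, each needing at least one non-B separator except possibly the last, plus the terminal run, cannot fit in more than $n$ trials; I would note this as a feasibility remark rather than prove it in detail. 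Everything else — the multinomial count, the independence computation, the factoring of $P_b^n$ — is routine and parallels Theorems \ref{thm 5} and \ref{thm 6} line for line.
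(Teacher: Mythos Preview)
Your proposal is correct and follows essentially the same combinatorial decomposition as the paper's own proof: introduce the blocks $O_t,J_t,T_t$ for $t\in\{1,s+1,\ldots,n\}$, write a typical outcome as a concatenation of such blocks followed by a terminal run $\underbrace{B\cdots B}_{i}$ with $i\in\{0,s,s+1,\ldots,n-x(s+1)\}$, impose the length equation and the constraint $x_{s+1}+y_{s+1}+z_{s+1}=x$ or $x-1$, count arrangements by the multinomial coefficient, and sum the independence-derived probabilities over all admissible configurations and~$i$. If anything your write-up is more explicit than the paper's about why the indices $t=2,\ldots,s$ are excluded and why the ``at least one of $x_{s+1},y_{s+1},z_{s+1}\ge 1$ or $i=s$'' clause is needed to force $M_n=s$ exactly.
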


\begin{proof}
Let  ${\underbrace{B\cdots B}_{t-1}}A=O_{t}$,\quad ${\underbrace{B\cdots B}_{t-1}}C=J_{t}$,\quad ${\underbrace{B\cdots B}_{t-1}}D=T_{t}$,\ where $s+1\leq t\leq n$. A typical element of the event $\left\{M_{n}=s\wedge NM_{n}=x\right\}$ is a sequence
\begin{equation*}\label{eq: 1.1}
\begin{split}
\cdots {\boxed{ R_{t}}} \cdots \overset{1}{\boxed{O_{s+1}\ or\ J_{s+1}\ or\ T_{s+1}}} \cdots  {\boxed{ R_{t}}}  \cdots\overset{x}{\boxed{O_{s+1}\ or\ J_{s+1}\ or\ T_{s+1}}} \cdots {\boxed{ R_{t}}} \cdots\underbrace{B\cdots B}_{i}
\end{split},
\end{equation*}
\noindent
where $i\in \{0, s, s+1, \ldots, n-x(s+1)\}$, and ${\boxed{ R_{t}}}$ represents any string $O_{t}$, $J_{t}$, and $T_{t},$ appearing $x_{t}$, $y_{t}$ and $z_{t}$ times, respectively, in the sequence, satisfying
\noindent
 \begin{equation}\label{eq: 1.1}
(x_{1}+y_{1}+z_{1})+(s+1)(x_{s+1}+y_{s+1}+z_{s+1})+\cdots +n(x_{n}+y_{n}+z_{n})=n-i,
\end{equation}
\noindent
subject to the condition $x_{s+1}+y_{s+1}+z_{s+1}=
\left\{
  \begin{array}{ll}
    x & \text{if $i\neq s,$} \\
    x-1 & \text{if $i=s$.}\\
  \end{array}
\right.$
\noindent
The number of different ways of arranging the sequence equals
\noindent
\begin{equation*}\label{eq:bn}
\begin{split}
{x_{1}+x_{s+1}+\cdots +x_{n}+y_{1}+y_{s+1}+\cdots+y_{n}+z_{1}+z_{s+1}+\cdots+z_{n}   \choose   x_{1},x_{s+1},\ldots,x_{n},y_{1},y_{s+1},\ldots,y_{n},z_{1},z_{s+1},\ldots,z_{n}}.
\end{split}
\end{equation*}
\noindent
By the independence of trials, the probability of the above sequence is given by
\noindent
\begin{equation}\label{eq:bn}
\begin{split}
P_{a}^{x_{1}+x_{s+1}+\cdots +x_{n}} P_{c}^{y_{1}+y_{s+1}+\cdots+y_{n}}P_{d}^{z_{1}+z_{s+1}+\cdots+z_{n}}P_{b}^{s(x_{s+1}+y_{s+1}+z_{s+1})+\cdots +(n-1)(x_{n}+y_{n}+z_{n})}.
\end{split}
\end{equation}
\noindent
The probability of the run of B's of length $i\in \{0,s,s+1,\ldots,n\}$ at the end of each of these sequences is $P_{b}^{i}$, which leads to the overall probability
\noindent
\begin{equation*}\label{eq: 1.1}
\begin{split}
P_{a}^{x_{1}+x_{s+1}+\cdots +x_{n}} P_{c}^{y_{1}+y_{s+1}+\cdots+y_{n}}P_{d}^{z_{1}+z_{s+1}+\cdots+z_{n}}P_{b}^{s(x_{s+1}+y_{s+1}+z_{s+1})+\cdots +(n-1)(x_{n}+y_{n}+z_{n})+i}.
\end{split}
\end{equation*}
\noindent
Summing over $i=0,s,s+1,\ldots,n$ the result follows.
\noindent
\end{proof}

%

\section*{Acknowledgement}

The author would like to thank Dr. Tommy Rene Jensen whose comments led to significant improvements in this manuscript.

\end{document}